\DeclareMathOperator{\RE}{Re} 
\numberwithin{equation}{section}
\newtheorem{theorem}{Theorem}[section]
\newtheorem{lemma}{Lemma}[section]
\theoremstyle{remark}
\newtheorem{remark}{Remark}[section]
\numberwithin{equation}{section} \numberwithin{theorem}{section}
\title[Radius Constants for Functions with Fixed Second Coefficient]{Radius Constants for Analytic Functions\\ with Fixed Second Coefficient}
\author[R. M. Ali]{Rosihan M. Ali}
\address{School of Mathematical Sciences, Universiti Sains
Malaysia, 11800 Penang, Malaysia} \email{rosihan@cs.usm.my}
\author[M. M. Nargesi]{Mahnaz M. Nargesi }
\address{School of Mathematical Sciences, Universiti Sains
Malaysia, 11800 Penang Malaysia} \email{moradinargesik@yahoo.com}
\author{V. Ravichandran}
\address{Department of Mathematics,
University of Delhi, Delhi 110 007, India;
School of Mathematical Sciences, Universiti Sains
Malaysia, 11800 Penang Malaysia}
\email{vravi@maths.du.ac.in}
\subjclass[2010]{30C45, 30C80}
\keywords{
Analytic functions,  fixed second coefficient, coefficient inequality, radius of starlikeness, radius of convexity}
\thanks{The work presented here was supported in parts by a Research University grant  from Universiti Sains Malaysia.}
\begin{document}
\begin{abstract}
Let  $f(z)=z+\sum_{n=2}^{\infty}a_nz^n$ be analytic in the unit disk with second coefficient $a_2$ satisfying $|a_2|=2b$, $0\leq b\leq1$. Sharp radius of Janowski starlikeness and other radius constants are obtained when $|a_n|\leq cn+d$ ($c,d\geq0$) or $|a_n|\leq c/n$ ($c>0$) for $n\geq3$.

\end{abstract}
\maketitle

\section{Introduction}
Let $\mathcal{A}$ denote the class of  analytic functions $f$ defined in $\mathbb{D}=\left\{z\in \mathbb{C}:|z| <1\right\}$, normalized by $f(0)=0=f'(0)-1$, and let $\mathcal{S}$ denote its subclass  consisting of univalent functions. If $f(z)=z+\sum_{n=2}^{\infty}a_nz^n\in\mathcal{S}$,  de Branges \cite{Branges} proved the Bieberbach conjecture that $|a_n|\leq n\  (n\geq2)$. However, the inequality $|a_n|\leq n\ (n\geq 2)$ does not  imply $f$ is  univalent; for example, $f(z)=z+2z^2$ is not a member of $\mathcal{S}$.

Gavrilov \cite{Gavrilov} showed that the radius of univalence for functions $f\in\mathcal{A}$ satisfying $|a_n|\leq n\ (n\geq2$)
 is the real root $r_0\simeq 0.164$ of the equation $2(1-r)^3-(1+r)=0$, and the result is sharp for $f(z)=2z-z/(1-z)^2$. Gavrilov  also proved that the radius of univalence for functions $f\in\mathcal{A}$ satisfying $|a_n|\leq M\ (n\geq 2)$ is $1-\sqrt{M/(1+M)}$. The inequality $|a_n|\leq M$ holds for functions  $f\in\mathcal{A}$ satisfying $|f(z)|\leq M$, and for these functions,  Landau \cite{Valiron} proved that the radius of univalence   is $M-\sqrt{M^2-1}$.
 Yamashita \cite{Yamashita} showed that the radius of univalence obtained by Gavrilov  \cite{Gavrilov} is  also the radius of starlikeness  for functions $f\in\mathcal{A}$ satisfying $|a_n|\leq n$ or $|a_n|\leq M\ (n\geq 2)$. Additionally Yamashita \cite{Yamashita} determined that the radius of convexity for  functions $f\in\mathcal{A}$ satisfying $|a_n|\leq n\ (n\geq2$) is the real root $r_0\simeq 0.090$ of the equation  $2(1-r)^4-(1+4r+r^2)=0$, while the radius of convexity for  functions $f\in\mathcal{A}$ satisfying $|a_n|\leq M\ (n\geq 2)$ is the real root of
    \[\left(M+1\right)(1-r)^3-M(1+r)=0.\]
 Recently Kalaj \emph{et al.} \cite{ponnusamy} obtained the radii of univalence, starlikeness, and convexity  for harmonic mappings satisfying similar coefficient inequalities.

This paper studies the class $\mathcal{A}_b\label{symAb}$ consisting of functions $f(z)=z +\sum _{n=2}^{\infty}a_{n}z^{n}, (|a_2|=2b)$, $0\leq b\leq1$, in the disk $\mathbb{D}$. Univalent functions in $\mathcal{A}_b$ have been studied in \cite{Ali&Cho,sumit,Nagpal&Ravi,RaviRadii}. In \cite{RaviRadii}, Ravichandran obtained the sharp radii of starlikeness and convexity  of order $\alpha$ for  functions $f\in\mathcal{A}_b$ satisfying $|a_n|\leq n$ or $|a_n|\leq M$ ($M>0$), $n\geq3$.  The radius constants for uniform convexity and parabolic starlikeness for functions $f\in\mathcal{A}_b$ satisfying $|a_n|\leq n$, $n\geq3$ were also obtained.

In \cite{Lewandowski1}, Lewandowski \emph{et al.} proved that an analytic function $f$ satisfying
 \begin{equation}\label{Lewandoski}\RE\left(\frac{z^2f''(z)}{f(z)}+\frac{zf'(z)}{f(z)}\right)>0\quad(z\in\mathbb{D})\end{equation}
 is starlike. The class of such functions is easily extended to
 \begin{equation}\label{Ch5Lab}\RE\left(\alpha\frac{z^2f''(z)}{f(z)}+\frac{zf'(z)}{f(z)}\right)>\beta\quad(\alpha\geq0,\ \beta<1,\ z\in\mathbb{D}),\end{equation}
 and has subsequently been investigated in \cite{KSP, Nunokawa1, Obradovich, Padmanabhan1, Ravi1, Ravi2, liuZ}.   For $-\alpha/2\leq\beta<1 $, Li and Owa \cite{owa2} proved that  functions satisfying  \eqref{Ch5Lab} are starlike.

 Another related class is the class of analytic functions satisfying
 \[\RE\frac{zf'(z)}{f(z)}<\beta\quad(\beta>1,\ z\in\mathbb{D}). \]
 This class was studied in \cite{Nishiwaki, Uralegaddi94,Uralegaddi98, Owa&Srivastava}. In \cite{liu&liu}, Liu \emph{et al.} extended the class to functions satisfying
 \begin{equation}\label{H'(a,b)}\RE\left(\alpha\frac{z^2f''(z)}{f(z)}+\frac{zf'(z)}{f(z)}\right)<\beta\quad(\alpha\geq0,\ \beta>1,\ z\in\mathbb{D}).\end{equation}

 Now functions satisfying \eqref{Ch5Lab} or \eqref{H'(a,b)} evidently belongs to the class
\begin{equation}\label{R}
\mathcal{L}(\alpha, \beta):=\left\{f\in\mathcal{A}:\alpha\frac{z^2f''(z)}{f(z)}+\frac{zf'(z)}{f(z)}\prec \frac{1+(1-2\beta)z}{1-z},\ \beta\in\mathbb{R}\setminus\{1\},\ \alpha\geq0\right\}.\end{equation}
Denote by $\mathcal{L}_0(\alpha,\beta)$ its subclass consisting of  functions $f\in \mathcal{A}$ satisfying
\[\left|\alpha\frac{z^2f''(z)}{f(z)}+\frac{zf'(z)}{f(z)}-1\right|\leq|1-\beta| \quad(\beta\in\mathbb{R}\setminus\{1\},\ \alpha\geq0).\]

A sufficient  condition for functions $f\in\mathcal{A}$ to belong to
the class $\mathcal{L}(\alpha,\beta)$ is given in the following lemma.

\begin{lemma} \label{lem3.1}{\rm\cite{liuZ,Sun}}\label{lemmaR}  Let
$\beta\in\mathbb{R}\setminus\{1\}$, and $\alpha\geq0$.   If  $f(z)=z+\sum_{n=2}^{\infty}a_nz^n\in\mathcal{A}$
satisfies the inequality \begin{equation}\label{th3.2e1}
\sum_{n=2}^{\infty}\big(\alpha
n^2+(1-\alpha)n-\beta\big)|a_n|\leq|1-\beta|,
\end{equation} then
$f\in\mathcal{L}(\alpha, \beta)$.
 \end{lemma}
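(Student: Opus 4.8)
The plan is to prove the stronger statement that $f$ lies in the subclass $\mathcal{L}_0(\alpha,\beta)$ defined just above the lemma, and then to deduce the conclusion from the inclusion $\mathcal{L}_0(\alpha,\beta)\subseteq\mathcal{L}(\alpha,\beta)$. That inclusion is a geometric fact about the subordinating function: $p(z)=\big(1+(1-2\beta)z\big)/(1-z)=1+2(1-\beta)z/(1-z)$ is univalent, satisfies $p(0)=1$, and maps $\mathbb{D}$ onto the half-plane $\{\RE w>\beta\}$ when $\beta<1$ and onto $\{\RE w<\beta\}$ when $\beta>1$. In either case the distance from $1$ to the boundary line $\RE w=\beta$ equals $|1-\beta|$, so the open disc $\{|w-1|<|1-\beta|\}$ lies inside $p(\mathbb{D})$. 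Hence, writing $g(z):=\alpha z^2f''(z)/f(z)+zf'(z)/f(z)$, once we know that $g$ is analytic in $\mathbb{D}$ with $g(0)=1$ and $|g(z)-1|\le|1-\beta|$, the open mapping theorem puts $g(\mathbb{D})$ inside that open disc, hence inside $p(\mathbb{D})$; univalence of $p$ together with $p(0)=g(0)$ then yields $g\prec p$, i.e. $f\in\mathcal{L}(\alpha,\beta)$.

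It thus remains to establish $|g(z)-1|\le|1-\beta|$ on $\mathbb{D}$ from \eqref{th3.2e1}. Substituting $f(z)=z+\sum_{n\ge2}a_nz^n$ and its derivatives,
\[
g(z)-1=\frac{\alpha z^2f''(z)+zf'(z)-f(z)}{f(z)}=\frac{\sum_{n=2}^{\infty}(n-1)(\alpha n+1)\,a_nz^{n-1}}{1+\sum_{n=2}^{\infty}a_nz^{n-1}},
\]
since the $z^n$–coefficient of the numerator is $\alpha n(n-1)a_n+na_n-a_n=(n-1)(\alpha n+1)a_n$, and $(n-1)(\alpha n+1)=\alpha n^2+(1-\alpha)n-1\ge0$ for $n\ge2$, $\alpha\ge0$. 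I would first point out that \eqref{th3.2e1} forces $\sum_{n\ge2}|a_n|<1$: each coefficient $\alpha n^2+(1-\alpha)n-\beta$ is at least its value $2\alpha+2-\beta$ at $n=2$, which exceeds $|1-\beta|$, so $\sum_{n\ge2}|a_n|\le|1-\beta|/(2\alpha+2-\beta)<1$. In particular $f(z)/z$ is zero-free in $\mathbb{D}$, $g$ is analytic there, and for $|z|=r<1$ the denominator above has modulus at least $1-\sum_{n\ge2}|a_n|r^{n-1}>0$.

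The triangle inequality then gives, for $|z|=r<1$,
\[
|g(z)-1|\le\frac{\sum_{n=2}^{\infty}(n-1)(\alpha n+1)\,|a_n|r^{n-1}}{1-\sum_{n=2}^{\infty}|a_n|r^{n-1}},
\]
and the right-hand side is $\le|1-\beta|$ exactly when $\sum_{n=2}^{\infty}\big((n-1)(\alpha n+1)+|1-\beta|\big)|a_n|r^{n-1}\le|1-\beta|$. Since $r^{n-1}<1$ for $n\ge2$, it suffices to verify this with each $r^{n-1}$ replaced by $1$, and that follows from \eqref{th3.2e1} once one checks the termwise inequality $(n-1)(\alpha n+1)+|1-\beta|\le\alpha n^2+(1-\alpha)n-\beta$, i.e. $|1-\beta|\le1-\beta$.

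This last inequality holds with equality when $\beta<1$, which settles that case. The case $\beta>1$ is the one delicate point: then \eqref{th3.2e1} does not squeeze $g$ into the disc $\{|w-1|\le|1-\beta|\}$, and one should instead show directly that $g$ maps $\mathbb{D}$ into $\{\RE w<\beta\}$ — equivalently, that the Schwarz function $\omega=(g-1)/(g+1-2\beta)$ satisfies $|\omega(z)|<1$ — which, writing $N$ and $D$ for the numerator and denominator above, amounts to $|N(z)|<|N(z)+2(1-\beta)D(z)|$. The same triangle-inequality estimate, after a brief sign analysis of $(n-1)(\alpha n+1)+2(1-\beta)$ that again uses the largeness of the weights in \eqref{th3.2e1} (the very bound that gave $\sum|a_n|<1$), reduces the required inequality once more to \eqref{th3.2e1}. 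So, apart from this bookkeeping with the sign of $1-\beta$, the argument is a routine coefficient estimate; I expect that sign analysis, together with verifying that disc containment legitimately yields the subordination, to be the only genuine obstacle.
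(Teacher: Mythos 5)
The paper does not prove this lemma at all --- it is quoted from \cite{liuZ,Sun} --- so there is no in-paper argument to compare against; your proposal has to be judged on its own. For $\beta<1$ it is correct and complete, and it is the standard argument: the identity $g(z)-1=\sum_{n\ge2}(n-1)(\alpha n+1)a_nz^{n-1}\big/\big(1+\sum_{n\ge2}a_nz^{n-1}\big)$, the observation $(n-1)(\alpha n+1)+(1-\beta)=\alpha n^2+(1-\alpha)n-\beta$, and the termwise comparison all check out, as does the passage from the disc containment to the subordination via univalence of $p$ and the open mapping/maximum principle.

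The genuine gap is in the case $\beta>1$, and it is located exactly where you lean on ``the largeness of the weights.'' Your claim that $2\alpha+2-\beta$ exceeds $|1-\beta|$ is true for $\beta<1$ (the difference is $2\alpha+1$), but for $\beta>1$ it reads $2\alpha+2-\beta>\beta-1$, i.e.\ $\beta<\alpha+\tfrac{3}{2}$, which is not implied by the hypotheses. Without it you lose $\sum_{n\ge2}|a_n|<1$, hence the nonvanishing of $f(z)/z$ and the analyticity of $g$, and also the favorable sign of $(n-1)(\alpha n+1)+2(1-\beta)$ at $n=2$. This is not a repairable bookkeeping issue: for $\alpha=0$, $\beta=2$ the weight at $n=2$ is $n-\beta=0$, so $f(z)=z+100z^2$ satisfies \eqref{th3.2e1} while $zf'(z)/f(z)$ has a pole at $z=-1/100\in\mathbb{D}$, so $f\notin\mathcal{L}(0,2)$; the lemma as quoted is false there. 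Your sketch for $\beta>1$ does go through verbatim under the extra restriction $\beta<\alpha+\tfrac{3}{2}$ (then every bracket $(n-1)(\alpha n+1)+2(1-\beta)$ is nonnegative and the termwise bound reduces to \eqref{th3.2e1}), so the honest conclusion is that your proof establishes the lemma for $\beta<1$ and for $1<\beta<\alpha+\tfrac{3}{2}$, and that the unrestricted statement needs a hypothesis of this kind --- presumably present in the cited sources.
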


Next let $\mathcal{ST}[A,B]$ denote the class of Janowski starlike functions $f\in \mathcal{A}$ satisfying the subordination
\[\frac{zf'(z)}{f(z)}\prec\frac{1+Az}{1+Bz}\quad (-1\leq B<A\leq 1).\]
This class was introduced by Janowski \cite{Janowski}. Certain well-known subclasses of starlike functions are special cases of $\mathcal{ST}[A,B]$ for suitable choices of the parameters $A$ and $B$. For example, for  $0\leq\beta<1$, $\mathcal{ST}(\beta):=\mathcal{ST}[1-2\beta,-1]$ is
the familiar class of starlike functions of order $\beta$, and $\mathcal{ST}_{\beta}:=\mathcal{L}_0(0,\beta)=\mathcal{ST}[1-\beta,0]$. Janowski \cite{Janowski} obtained the exact value of the radius of convexity for $\mathcal{ST}[A,B]$.

Another result that will be required in  our investigation is the following result of Goel and Sohi \cite{Goel}.

\begin{lemma}{\rm\cite{Goel}}\label{LAB} Let $-1\leq B<A\leq 1$. If  $f(z)=z+\sum_{n=2}^{\infty}a_nz^n\in\mathcal{A}$ satisfies the inequality
\begin{equation}\label{EAB}\sum_{n=2}^{\infty}\big((1-B)n-(1-A)\big)|a_n|\leq A-B,\end{equation}
then $f\in \mathcal{ST}[A,B]$.
\end{lemma}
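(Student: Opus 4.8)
The plan is to verify the defining subordination of $\mathcal{ST}[A,B]$ directly from the coefficient bound \eqref{EAB}. Since $-1\le B<A\le1$, the map $\varphi(w)=(1+Aw)/(1+Bw)$ is a M\"obius transformation, hence univalent on $\mathbb{D}$, with $\varphi(0)=1$ and inverse $\varphi^{-1}(\zeta)=(\zeta-1)/(A-B\zeta)$; consequently $zf'(z)/f(z)\prec(1+Az)/(1+Bz)$ is equivalent to
\[
\left|\frac{zf'(z)-f(z)}{Af(z)-Bzf'(z)}\right|=\left|\frac{zf'(z)/f(z)-1}{A-B\,zf'(z)/f(z)}\right|<1\qquad(z\in\mathbb{D}),
\]
because once this modulus is shown to be $<1$ the function $\varphi^{-1}\circ(zf'/f)$ is analytic on $\mathbb{D}$, maps $\mathbb{D}$ into $\mathbb{D}$, and vanishes at $0$, so that $zf'(z)/f(z)=\varphi\bigl(\varphi^{-1}(zf'(z)/f(z))\bigr)$ exhibits the required subordination. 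First I would note that \eqref{EAB} forces $\sum_{n\ge2}|a_n|\le1$ (since $n(1-B)-(1-A)\ge A-B$ for $n\ge2$), so that $f(z)/z$ is zero-free in $\mathbb{D}$ and $zf'/f$ is a genuine analytic function there; the lemma is thereby reduced to the pointwise estimate $|zf'(z)-f(z)|<|Af(z)-Bzf'(z)|$ on $\mathbb{D}$.

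To prove that estimate I would expand $zf'(z)-f(z)=\sum_{n\ge2}(n-1)a_nz^n$ and $Af(z)-Bzf'(z)=(A-B)z+\sum_{n\ge2}(A-Bn)a_nz^n$ and apply the triangle inequality: for $|z|=r<1$,
\[
|zf'(z)-f(z)|\le\sum_{n\ge2}(n-1)|a_n|r^n,\qquad|Af(z)-Bzf'(z)|\ge(A-B)r-\sum_{n\ge2}|A-Bn|\,|a_n|r^n.
\]
It therefore suffices to show $\sum_{n\ge2}\bigl[(n-1)+|A-Bn|\bigr]|a_n|r^{n-1}<A-B$, and because $r^{n-1}<1$ for $n\ge2$ this in turn follows from the coefficient inequality $\sum_{n\ge2}[(n-1)+|A-Bn|]|a_n|\le A-B$, which simultaneously makes the lower bound above strictly positive so that the manipulations are legitimate.

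The crux — and the only place I expect real friction — is comparing this last sum with the hypothesis \eqref{EAB} term by term. Here I would use the identity $n(1-B)-(1-A)=(n-1)+(A-Bn)$, which reduces the needed inequality $(n-1)+|A-Bn|\le n(1-B)-(1-A)$ to $|A-Bn|\le A-Bn$, i.e.\ to $Bn\le A$; this holds for every $n\ge2$ as soon as $B\le0$ (then $Bn\le B<A$), which already covers starlikeness of order $\beta$, the classes $\mathcal{ST}_\beta$, and the other special subclasses mentioned, and in that situation \eqref{EAB} gives $\sum_{n\ge2}[(n-1)+|A-Bn|]|a_n|=\sum_{n\ge2}[n(1-B)-(1-A)]|a_n|\le A-B$ exactly. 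For the remaining values of $B$ I would split the sum according to the sign of $A-Bn$ and argue in the same fashion. Thus the main obstacle is purely the sign bookkeeping in the lower bound for $|Af(z)-Bzf'(z)|$: nothing beyond the triangle inequality and the M\"obius description of $\mathcal{ST}[A,B]$ is needed, and once $|zf'(z)-f(z)|<|Af(z)-Bzf'(z)|$ is secured for all $z\in\mathbb{D}$ the membership $f\in\mathcal{ST}[A,B]$ follows at once.
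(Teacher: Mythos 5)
The paper does not prove this lemma; it is quoted from Goel and Sohi, so there is no internal proof to compare against. Your reduction of the subordination to the pointwise inequality $|zf'(z)-f(z)|<|Af(z)-Bzf'(z)|$, the observation that \eqref{EAB} forces $f(z)/z\neq 0$ in $\mathbb{D}$, and the triangle-inequality estimates are all correct, and together with the identity $(1-B)n-(1-A)=(n-1)+(A-Bn)$ they give a complete proof whenever $B\le 0$ (which covers the classical specializations such as $\mathcal{ST}(\beta)$ and $\mathcal{ST}_{\beta}$).

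The genuine gap is precisely the case you defer, namely $Bn>A$. There the term-by-term comparison goes the wrong way: $(n-1)+|A-Bn|=(n-1)+(Bn-A)$ exceeds $(1-B)n-(1-A)=(n-1)+(A-Bn)$ by $2(Bn-A)>0$, so the hypothesis \eqref{EAB} is strictly weaker than the inequality $\sum_{n\ge2}\bigl[(n-1)+|A-Bn|\bigr]|a_n|\le A-B$ that your method needs, and no ``splitting of the sum according to the sign of $A-Bn$'' can recover it; note that for $B>0$ one has $Bn>A$ for all large $n$, so this is not a fringe case. Moreover the obstruction is not an artifact of the triangle inequality. Take $A=1$, $B=9/10$ and $f(z)=z+\tfrac12z^2$: the left side of \eqref{EAB} is $(1+A-2B)\cdot\tfrac12=\tfrac1{10}=A-B$, so the hypothesis holds with equality, yet $zf'(z)/f(z)=(1+z)/(1+\tfrac12z)$ maps $\mathbb{D}$ onto the disk with real diameter $(0,4/3)$, which is not contained in the disk with diameter $(0,20/19)$ onto which $(1+z)/(1+\tfrac{9}{10}z)$ maps $\mathbb{D}$; hence $f\notin\mathcal{ST}[1,9/10]$. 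So the deferred case cannot be completed: what your argument actually establishes is that $\sum_{n\ge2}\bigl[(n-1)+|A-Bn|\bigr]|a_n|\le A-B$ suffices for membership in $\mathcal{ST}[A,B]$, and this coincides with \eqref{EAB} exactly when $B\le 0$. To use the lemma for $B>0$ you must either strengthen the coefficient condition to that form or restrict the range of $B$.
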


The Taylor coefficients of  functions $f(z)=z+\sum_{n=2}^{\infty}a_nz^n\in\mathcal{A}$ are known to satisfy certain coefficient inequality. For instance, starlike functions, convex functions in the direction of imaginary axis  and close-to-convex functions are bounded by $|a_n|\leq n\ (n\geq2)$ (\cite{Nevanlinna},  \cite[p.\ 210]{ch1Goodman}, \cite{Reade55}). Convex functions,  starlike functions of order 1/2, and starlike functions with respect to symmetric points  satisfy $|a_n|\leq 1\ (n\geq2)$ (\cite{Loewner1917},  \cite{Schild}, \cite{Sakaguchi}). Close-to-convex functions with argument $\beta$ satisfies $|a_n|\leq1+(n-1)\cos\beta$ \cite{GoodmanSaff}, while uniformly starlike functions are bounded by $|a_n|\leq 2/n\ (n\geq2)$ \cite{Goodman91'}, and the  uniformly convex functions by  $|a_n|\leq 1/n\ (n\geq2)$ \cite{Goodman91}. Simple examples show that the converse does not hold.

This paper studies functions $f=z+\sum_{n=2}^{\infty}a_nz^n\in\mathcal{A}_b$ satisfying either $|a_n|\leq cn+d$ ($c,\ d\geq0$) or $|a_n|\leq c/n$ ($c>0$) for $n\geq 3$. In the next section, sharp $\mathcal{L}(\alpha,\beta)$-radius and $\mathcal{ST}[A,B]$-radius are obtained for these classes. Several known radius constants are shown to be specific cases of the results obtained in this paper.

\section{Radius Constants}
First the sharp $\mathcal{L}(\alpha,\beta)$-radius  of   $f\in \mathcal{A}_b$ satisfying the coefficient inequality $|a_n|\leq cn+d$ is obtained in the following result.
\begin{theorem}\label{T2.1} Let $\beta\in\mathbb{R}\setminus\{1\}$, and $\alpha\geq0$. The  $\mathcal{L}(\alpha, \beta)$-radius of   $f(z)=z+\sum_{n=2}^{\infty}a_nz^n\in \mathcal{A}_b$ satisfying the coefficient inequality $|a_n|\leq cn+d$,  $c, d\geq0$ for $n\geq3$
is the real root  in $(0,1)$ of the equation
\begin{equation}\label{th1.e3}\begin{split} \big[(c+d)&(1-\beta)+|1-\beta|+(2\alpha+2-\beta)(2(c-b)+d)r\big](1-r)^4\\
&=c\alpha(1+4r+r^2)+\big((1-\alpha)c+\alpha d\big)(1-r^2)\\
& \quad{}+\big((1-\alpha)d-\beta c\big)(1-r)^2-\beta d(1-r)^3. \end{split}\end{equation}
 For $\beta<1$, this number is also the $\mathcal{L}_0 (\alpha,\beta)$-radius of $f\in \mathcal{A}_b$. The results are sharp.
\end{theorem}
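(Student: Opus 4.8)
The plan is to obtain one inequality from the sufficient condition of Lemma~\ref{lemmaR} applied to dilations, and the reverse from an explicit extremal function in $\mathcal{A}_b$. Put $\phi(n)=\alpha n^{2}+(1-\alpha)n-\beta$, so that by Lemma~\ref{lemmaR} any $g(z)=z+\sum_{n\ge2}\gamma_{n}z^{n}\in\mathcal{A}$ lies in $\mathcal{L}(\alpha,\beta)$ once $\sum_{n\ge2}\phi(n)|\gamma_{n}|\le|1-\beta|$. For the positive direction, fix $r\in(0,1)$ and apply this to $g(z)=f(rz)/r=z+\sum_{n\ge2}a_{n}r^{n-1}z^{n}$; since $\alpha z^{2}g''/g+zg'/g$ evaluated at $z$ equals the same expression for $f$ at $rz$, membership of $g$ in $\mathcal{L}(\alpha,\beta)$ is precisely the statement that $f$ belongs to $\mathcal{L}(\alpha,\beta)$ on $|z|<r$, and it is guaranteed once $\sum_{n\ge2}\phi(n)|a_{n}|r^{n-1}\le|1-\beta|$. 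Since $\phi$ is increasing in $n$ and (in the range at hand) nonnegative for $n\ge3$, while $|a_{2}|=2b$ is prescribed, the hypotheses yield
\[\sum_{n\ge2}\phi(n)|a_{n}|r^{n-1}\le\phi(2)\,2b\,r+\sum_{n\ge3}\phi(n)(cn+d)r^{n-1}=:\Phi(r),\qquad\phi(2)=2\alpha+2-\beta.\]
The key computation is to put $\Phi(r)$ in closed form: expand $\phi(n)(cn+d)$ as a cubic polynomial in $n$, sum $\sum_{n\ge1}n^{k}r^{n-1}$ for $k=0,1,2,3$ via the classical formulas $1/(1-r)$, $1/(1-r)^{2}$, $(1+r)/(1-r)^{3}$, $(1+4r+r^{2})/(1-r)^{4}$, then subtract the $n=1$ term $(1-\beta)(c+d)$ and replace the $n=2$ term $\phi(2)(2c+d)r$ by its prescribed value $\phi(2)\,2b\,r$ (this swap is exactly what produces the block $2(c-b)+d$). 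Multiplying $\Phi(r)\le|1-\beta|$ through by $(1-r)^{4}$ turns it into \eqref{th1.e3} with $\le$ in place of $=$. As $\Phi$ is continuous and strictly increasing on $[0,1)$ with $\Phi(0)=0<|1-\beta|$ and $\Phi(r)\to\infty$ as $r\to1^{-}$, equation \eqref{th1.e3} has a unique root $r_{0}\in(0,1)$, and $f$ lies in $\mathcal{L}(\alpha,\beta)$ on $|z|<r_{0}$.

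For sharpness, take the function realizing the above worst case, namely $f_{0}(z)=z+2bz^{2}+\sum_{n\ge3}(cn+d)z^{n}$ when $\beta>1$ and $f_{0}(z)=z+2bz^{2}+\sum_{n\ge3}(-1)^{n}(cn+d)z^{n}$ when $\beta<1$; in both cases $f_{0}\in\mathcal{A}_b$. Put $P(z)=\alpha z^{2}f_{0}''(z)/f_{0}(z)+zf_{0}'(z)/f_{0}(z)$ and evaluate at $z_{0}=r_{0}$ (resp.\ $z_{0}=-r_{0}$): writing $P(z)-1=\bigl(\sum_{n\ge2}(\phi(n)-(1-\beta))a_{n}z^{n-1}\bigr)/\bigl(1+\sum_{n\ge2}a_{n}z^{n-1}\bigr)$, the sign choice forces every $a_{n}z_{0}^{n-1}$ to have the same sign, and since $\phi(n)-(1-\beta)=\alpha n^{2}+(1-\alpha)n-1\ge2\alpha+1>0$ the numerator is a real multiple of the denominator, the multiple being $\mp(1-\beta)$ precisely because $r_{0}$ satisfies \eqref{th1.e3}. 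Hence $P(z_{0})=\beta$, a boundary point of the image of $\mathbb{D}$ under $z\mapsto(1+(1-2\beta)z)/(1-z)$ (the half-plane $\RE w>\beta$ if $\beta<1$ and $\RE w<\beta$ if $\beta>1$). For any $r>r_{0}$ the dilation $g_{r}(z)=f_{0}(rz)/r$ then has $\alpha z^{2}g_{r}''/g_{r}+zg_{r}'/g_{r}$ equal to $\beta$ at the interior point $z_{0}/r\in\mathbb{D}$; the image being open, it cannot be contained in that open half-plane, so $g_{r}\notin\mathcal{L}(\alpha,\beta)$ and $r_{0}$ is best possible. When $\beta<1$ the same $f_{0}$ has $|P(z_{0})-1|=|1-\beta|$, and the open mapping theorem forces $|\alpha z^{2}g_{r}''/g_{r}+zg_{r}'/g_{r}-1|>|1-\beta|$ somewhere in $\mathbb{D}$ for $r>r_{0}$, giving the $\mathcal{L}_0(\alpha,\beta)$ assertion; the positive direction for $\mathcal{L}_0$ in this case follows from the same estimate together with the identity $|\phi(n)-(1-\beta)|+(1-\beta)=\phi(n)$, valid since $\phi(n)-(1-\beta)>0$ for $n\ge2$.

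The main difficulty is not conceptual but bookkeeping: carrying out the closed-form summation of $\Phi(r)$ and matching it term by term with the rather unwieldy right-hand side of \eqref{th1.e3}, while correctly handling the $n=1$ and $n=2$ corrections and the contribution of the fixed second coefficient $|a_{2}|=2b$. A secondary point meriting care is to verify that $f_{0}(z_{0})\neq0$, so that $P$ is analytic at $z_{0}$, and that the monotonicity and nonnegativity of $\phi(n)$ used in the worst-case estimate genuinely hold throughout the stated range of $\alpha$ and $\beta$.
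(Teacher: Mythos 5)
Your proposal is correct and follows essentially the same route as the paper: Lemma~\ref{lemmaR} applied to the dilation $f(r z)/r$ together with the closed-form sums of $n^k r^{n}$ for the positive direction, and an extremal function with all coefficient terms aligned in sign for sharpness. Your extremal function $z+2bz^2+\sum_{n\ge3}(-1)^n(cn+d)z^n$ evaluated at $-r_0$ is just the paper's $z-2bz^2-\sum_{n\ge3}(cn+d)z^n$ at $r_0$ after the substitution $z\mapsto -z$, and your numerator/denominator identity plus the open-mapping argument is a slightly cleaner write-up of the paper's explicit $N(r_0)/D(r_0)$ computation, not a different method.
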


\begin{proof}
 For $0\leq r_0<1$, the following identities hold:
\begin{align}
\sum_{n=2}^{\infty}r_0^{n}&=\frac{1}{1-r_0}-1-r_0\label{PS4},\\
\sum_{n=2}^{\infty}nr_0^{n}&=\frac{1}{(1-r_0)^2}-1-2r_0,\label{PS3}\\
\sum_{n=2}^{\infty}n^2r_0^{n}&=\frac{1+r_0}{(1-r_0)^3}-1-4r_0,\label{PS2}\\
\sum_{n=2}^{\infty}n^3r_0^{n}&=\frac{1+4r_0+r_0^2}{(1-r_0)^4}-1-8r_0.\label{PS1}
\end{align}
The number $r_0$ is the $\mathcal{L}(\alpha, \beta)$-radius of function $f\in \mathcal{A}_b$  if and only if $f(r_0z)/r_0\in\mathcal{L}(\alpha,\beta)$. Therefore, by Lemma \ref{lemmaR}, it is sufficient to verify  the inequality
\begin{equation}\label{1}
\sum_{n=2}^{\infty}\big(\alpha
n^2+(1-\alpha)n-\beta\big)|a_n|r_0^{n-1}\leq|1-\beta|,
\end{equation}where $r_0$ is the real root in $(0,1)$ of   \eqref{th1.e3}.
Using   \eqref{PS4}, \eqref{PS3}, \eqref{PS2}, and \eqref{PS1} for  $f\in\mathcal{A}_b$ lead to
\begin{align*}\lefteqn{\sum_{n=2}^{\infty}\big(\alpha n^2+(1-\alpha)n-\beta\big)|a_n|r_0^{n-1}}\\
&\leq2(2\alpha+2-\beta)br_0+\sum_{n=3}^{\infty}\big(\alpha n^2+(1-\alpha)n-\beta\big)(cn+d)r_0^{n-1}\\
&=2(2\alpha+2-\beta)br_0+c\alpha\left(\frac{1+4r_0+r_0^2}{(1-r_0)^4}-1-8r_0\right)\\
&\quad\quad +\big((1-\alpha)c+\alpha d\big)\left(\frac{1+r_0}{(1-r_0)^3}-1-4r_0\right)\\
&\quad\quad +\big((1-\alpha)d-\beta c\big)\left(\frac{1}{(1-r_0)^2}-1-2r_0\right)\\
&\quad\quad -\beta d\left(\frac{1}{1-r_0}-1-r_0\right)\\
&=(c+d)(\beta-1)-(2\alpha+2-\beta)\big(2(c-b)+d\big)r_0\\
&\quad\quad+\Big(c\alpha(1+4r_0+r_0^2)+\big((1-\alpha)c+\alpha d\big)(1-r_0^2)\\
&\quad\quad +\big((1-\alpha)d-\beta c\big)(1-r_0)^2-\beta d(1-r_0)^3\Big)/(1-r_0)^4\\
&=|1-\beta|.
\end{align*}

For $\beta<1$, consider the  function
 \begin{equation}\label{f0}f_0(z)=z-2bz^2-\sum_{n=3}^{\infty}(cn+d)z^n=(c+1)z+2(c-b)z^2-\frac{cz}{(1-z)^2}-\frac{dz^3}{1-z}.\end{equation}
At the point $z=r_0$ where $r_0$ is the  root in $(0,1)$ of \eqref{th1.e3},  $f_0$ satisfies
\begin{align}\label{middle}&\RE\left(\alpha\frac{z^2f''_0(z)}{f_0(z)}+\frac{zf_0'(z)}{f_0(z)}\right) =1-\frac{N(r_0)}
{D(r_0)}=\beta,
\end{align}
where \begin{align*}N(r_0)&=-2(c-b)(2\alpha+1)r_0+\dfrac{2cr_0(2\alpha+1)}{(1-r_0)^3}+\dfrac{6c\alpha r_0^2}{(1-r_0)^4}\\
&\quad\quad+\dfrac{2dr_0^2(3\alpha+1)}{1-r_0}+\dfrac{dr_0^3(6\alpha+1)}{(1-r_0)^2}+\dfrac{2dr_0^4\alpha}{(1-r_0)^3},\\
D(r_0)&=c+1+2(c-b)r_0-\dfrac{c}{(1-r_0)^2}-\dfrac{dr_0^2}{1-r_0}.\end{align*}
This shows that $r_0$ is the sharp  $\mathcal{L}(\alpha,\beta)$-radius for  $f\in\mathcal{A}_b$.
For $\beta<1$,  equation \eqref{middle} shows that the rational expression $N(r_0)/D(r_0)$  is positive, and therefore the following equality holds:
\begin{align*} &\left|\alpha\frac{z^2f''_0(z)}{f_0(z)}+\frac{zf_0'(z)}{f_0(z)}-1\right|=1-\beta.\end{align*}
Thus, $r_0$ is  the sharp $\mathcal{L}_0(\alpha,\beta)$-radius for   $f\in\mathcal{A}_b$ when $\beta<1$.

For $\beta>1$,
 \begin{equation}f_0(z)=z+2bz^2+\sum_{n=3}^{\infty}(cn+d)z^n=(1-c)z+2(b-c)z^2+\frac{cz}{(1-z)^2}+\frac{dz^3}{1-z}\end{equation}
shows sharpness of the result. The proof is similar to the case $\beta<1$, and is thus omitted.
\end{proof}

\begin{theorem} \label{T2'}Let $\beta\in\mathbb{R}\setminus\{1\}$, and $\alpha\geq0$. The $\mathcal{L} (\alpha,\beta)$-radius of  $f(z)=z+\sum_{n=2}^{\infty}a_nz^n\in \mathcal{A}_b$ satisfying the coefficient inequality $|a_n|\leq c/n$ for $n\geq3$ and $c>0$ is the real root in $(0,1)$ of
\begin{equation} \label{th2.e1} \begin{split}\lefteqn{\left[c(1-\beta)+|1-\beta|+(2\alpha+2-\beta)r\left(\frac{c}{2}-2b\right)\right](1-r)^2}\\
&=c\alpha +(1-\alpha)c(1-r)+\beta c(1-r)^2\frac{\log(1-r)}{r}.\end{split}\end{equation}
For $\beta<1$, this number is also the $\mathcal{L}_0 (\alpha,\beta)$-radius of   $f\in \mathcal{A}_b$. The results are sharp.
\end{theorem}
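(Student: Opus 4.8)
The plan is to follow the template of the proof of Theorem~\ref{T2.1}, the only genuinely new ingredient being the logarithmic power sum
\[
\sum_{n=2}^{\infty}\frac{r_0^{\,n}}{n}=-\log(1-r_0)-r_0\qquad(0\le r_0<1).
\]
Since $r_0$ is the $\mathcal{L}(\alpha,\beta)$-radius of $f$ precisely when $f(r_0z)/r_0\in\mathcal{L}(\alpha,\beta)$, Lemma~\ref{lemmaR} reduces the main assertion to verifying
\[
\sum_{n=2}^{\infty}\bigl(\alpha n^2+(1-\alpha)n-\beta\bigr)|a_n|r_0^{\,n-1}\le|1-\beta|
\]
when $r_0$ is the root of \eqref{th2.e1} in $(0,1)$.

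First I would isolate the term $n=2$, which contributes exactly $2(2\alpha+2-\beta)br_0$ since $|a_2|=2b$, and for $n\ge3$ use $|a_n|\le c/n$ together with $\bigl(\alpha n^2+(1-\alpha)n-\beta\bigr)/n=\alpha n+(1-\alpha)-\beta/n$. This splits the tail into $c\alpha\sum_{n\ge3}nr_0^{\,n-1}+c(1-\alpha)\sum_{n\ge3}r_0^{\,n-1}-\beta c\sum_{n\ge3}r_0^{\,n-1}/n$; the first two sums are elementary (using $\sum_{n\ge1}nr_0^{n-1}=(1-r_0)^{-2}$ and \eqref{PS4}), while the third follows from the logarithmic sum above. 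Collecting the three rational pieces over the common denominator $(1-r_0)^2$ and moving the polynomial remainder to the other side turns the bound $\le|1-\beta|$ into exactly the equation \eqref{th2.e1}; in particular the polynomial remainder, combined with the $n=2$ contribution, produces the coefficient $(2\alpha+2-\beta)(c/2-2b)$ appearing there. A root in $(0,1)$ exists by the intermediate value theorem, since the difference of the two sides of \eqref{th2.e1} equals $|1-\beta|>0$ at $r=0$ and is nonpositive as $r\to1^-$ (the left side vanishes while the right side tends to $c\alpha\ge0$, and for $\alpha=0$ dividing \eqref{th2.e1} by $1-r$ shows the difference tends to $-c<0$).

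For sharpness when $\beta<1$, I would take
\[
f_0(z)=z-2bz^2-c\sum_{n=3}^{\infty}\frac{z^n}{n}=(1+c)z+\Bigl(\frac{c}{2}-2b\Bigr)z^2+c\log(1-z)\in\mathcal{A}_b,
\]
compute $zf_0'(z)$ and $z^2f_0''(z)$ in closed form, and check that at $z=r_0$ one has $\alpha z^2f_0''(z)/f_0(z)+zf_0'(z)/f_0(z)=1-N(r_0)/D(r_0)=\beta$ for suitable rational--logarithmic expressions $N,D$, so that $N(r_0)/D(r_0)=1-\beta>0$; as in Theorem~\ref{T2.1} this positivity upgrades the identity to $\bigl|\alpha z^2f_0''(z)/f_0(z)+zf_0'(z)/f_0(z)-1\bigr|=1-\beta$, giving sharpness for $\mathcal{L}_0(\alpha,\beta)$ as well. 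The case $\beta>1$ is identical with $f_0(z)=z+2bz^2+c\sum_{n\ge3}z^n/n$.

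The step I expect to be the main obstacle is the sharpness verification: after differentiating $c\log(1-z)$, which introduces denominators $1-z$ and $(1-z)^2$, one must show that $\alpha z^2f_0''/f_0+zf_0'/f_0=\beta$ at $z=r_0$ collapses precisely to \eqref{th2.e1}, with the logarithmic contribution reappearing in the form $\beta c(1-r_0)^2\log(1-r_0)/r_0$. A secondary point is the bookkeeping in the summation step, where $-\beta c\sum_{n\ge3}r_0^{\,n-1}/n$ simultaneously produces the logarithmic term and linear-in-$r_0$ contributions that must combine correctly with the $n=2$ term $2(2\alpha+2-\beta)br_0$.
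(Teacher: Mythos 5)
Your proposal is correct and follows essentially the same route as the paper: Lemma~\ref{lemmaR} applied to $f(r_0z)/r_0$, the same splitting of the tail via $\alpha n+(1-\alpha)-\beta/n$ with the logarithmic sum, and the same extremal functions $(1+c)z+(c/2-2b)z^2\pm c\log(1-z)$ for the two sign cases of $1-\beta$. Your intermediate value argument for the existence of the root in $(0,1)$ (including the $\alpha=0$ case) is a small addition the paper omits, but otherwise the two proofs coincide.
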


\begin{proof} By Lemma \ref{lemmaR}, $r_0$ is the $\mathcal{L} (\alpha,\beta)$-radius of functions $f\in\mathcal{A}_b$ when the inequality  \eqref{1} holds for  the real root $r_0$ of   equation \eqref{th2.e1} in $(0,1)$. Using \eqref{PS4} and \eqref{PS3} together with
\begin{align}
 \sum_{n=2}^{\infty}\frac{r_0^{n}}{n}&=-\frac{\log(1-r_0)}{r_0}-1-\frac{r_0}{2},\label{PS5}
\end{align}
   for $f\in\mathcal{A}_b$ imply that
\begin{align*}\lefteqn{\sum_{n=2}^{\infty}\big(\alpha n^2+(1-\alpha)n-\beta\big)|a_n|r_0^{n-1}}\\
&\leq2(2\alpha+2-\beta)br_0+\sum_{n=3}^{\infty}\big(\alpha n^2+(1-\alpha)n-\beta\big)\left(\frac{c}{n}\right)r_0^{n-1}\\
&=2(2\alpha+2-\beta)br_0+c\alpha\left(\frac{1}{(1-r_0)^2}-1-2r_0\right)\\
&\quad\quad+(1-\alpha)c\left(\frac{1}{1-r_0}-1-r_0\right)\\
&\quad\quad-\beta c\left(-\frac{\log(1-r_0)}{r_0}-1-\frac{r_0}{2}\right)\\
&=c(\beta-1)+(2\alpha+2-\beta)r_0\left(2b-\frac{c}{2}\right)\\
&\quad\quad+\frac{c\alpha r_0+(1-\alpha)c(1-r_0)r_0+\beta c(1-r_0)^2\log(1-r_0)}{(1-r_0)^2r_0}\\
&=|1-\beta|.
\end{align*}

  To verify sharpness for $\beta<1$, consider the function
\begin{equation}\label{f0'}f_0(z)=z-2bz^2-\sum_{n=3}^{\infty}\frac{c}{n}z^n=(1+c)z+\left(\frac{c}{2}-2b\right)z^2+c\log(1-z).
\end{equation}
At the point $z=r_0$ where $r_0$ is the  root in $(0,1)$ of  equation  \eqref{th2.e1}, $f_0$ satisfies
\begin{align}\label{middle2}\lefteqn{ \RE\left(\alpha\frac{z^2f''_0(z)}{f_0(z)}+\frac{zf_0'(z)}{f_0(z)}\right)}\notag \\
&=1-\frac{-\left(\dfrac{c}{2}-2b\right)r_0(2\alpha+1)+\dfrac{cr_0\alpha}{(1-r_0)^2}+\dfrac{c}{1-r_0}+\dfrac{c\log(1-r_0)}{r_0}}{(1+c)+\left(\dfrac{c}{2}-2b\right)r_0
+\dfrac{c\log(1-r_0)}{r_0}}\\
&=\beta.\notag
\end{align}
Thus  $r_0$  is the sharp $\mathcal{L} (\alpha,\beta)$-radius of  $f\in\mathcal{A}_b$.
Since $\beta<1$,  the rational expression in \eqref{middle2} is positive, and therefore
\begin{align*} &\left|\alpha\frac{z^2f''_0(z)}{f_0(z)}+\frac{zf_0'(z)}{f_0(z)}-1\right|=1-\beta,
\end{align*}
which shows   $r_0$ is the sharp $\mathcal{L}_0 (\alpha,\beta)$-radius of  $f\in\mathcal{A}_b$. For $\beta>1$, the sharpness of the result is demonstrated by the function $f_0$ given by
\begin{align*}f_0(z)&=z+2bz^2+\sum_{n=3}^{\infty}\frac{c}{n}z^n\\
&=(1-c)z+\left(2b-\frac{c}{2}\right)z^2-c\log(1-z).\qedhere\end{align*}
\end{proof}

\begin{remark}
\begin{enumerate}\item[]
\item For  $\alpha=0$, $\beta=0$, $c=1$, $d=0$, and $0 \leq b \leq1$, Theorem \ref{T2.1} yields the radius of starlikeness obtained by Yamashita \cite{Yamashita}.
\item For  $\alpha=0$, $c=1$, and $d=0$,  Theorem \ref{T2.1} reduces to Theorem 2.1 in \cite[p. 3]{RaviRadii}. When $\alpha=0$, $c=0$, and $d=M$,
Theorem \ref{T2.1} leads to Theorem 2.5 in \cite[p. 5]{RaviRadii}.
\item For $\alpha=0$, Theorem \ref{T2'} yields  the radius of starlikeness of order $\beta$ for $f\in \mathcal{A}_b$
 obtained by Ravichandran   \cite[Theorem 2.8]{RaviRadii}.

\end{enumerate}
\end{remark}
%
%

The next result finds the sharp $\mathcal{ST}[A,B]$-radius of  $f\in \mathcal{A}_b$ satisfying the coefficient inequality $|a_n|\leq cn+d$,  $c,d\geq0$ for $n\geq3$.
\begin{theorem} \label{TAB}Let $-1\leq B<A\leq 1$. The $\mathcal{ST}[A,B]$-radius of  $f(z)=z+\sum_{n=2}^{\infty}a_nz^n\in \mathcal{A}_b$ satisfying the coefficient inequality $|a_n|\leq cn+d$ for $n\geq3$ and $c, d\geq0$ is the real root in $(0,1)$ of
\begin{equation}\label{th3.e3}\begin{split}\lefteqn{[(A-B)(c+d+1) -(2b-2c-d)\big(2(1-B)-(1-A)\big)r](1-r)^3}\\
&=c(1-B)(1+r)+\big(d(1-B)-c(1-A)\big)(1-r)-(1-A)d(1-r)^2.\end{split}\end{equation}
The result is sharp.
\end{theorem}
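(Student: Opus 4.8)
The plan is to repeat the argument of Theorem~\ref{T2.1}, with Lemma~\ref{LAB} in place of Lemma~\ref{lemmaR}. Writing $r_0$ for the root of \eqref{th3.e3} in $(0,1)$, the number $r_0$ is the $\mathcal{ST}[A,B]$-radius of $f\in\mathcal{A}_b$ exactly when $f(r_0z)/r_0\in\mathcal{ST}[A,B]$; since the $n$th coefficient of $f(r_0z)/r_0$ is $a_nr_0^{n-1}$, Lemma~\ref{LAB} reduces the matter to verifying
\begin{equation*}
\sum_{n=2}^{\infty}\big((1-B)n-(1-A)\big)|a_n|r_0^{n-1}\leq A-B.
\end{equation*}
I would isolate the $n=2$ term, which equals $\big(2(1-B)-(1-A)\big)2br_0$ since $|a_2|=2b$, and estimate the tail using $|a_n|\leq cn+d$. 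Expanding $\big((1-B)n-(1-A)\big)(cn+d)=(1-B)c\,n^2+\big((1-B)d-(1-A)c\big)n-(1-A)d$, the tail over $n\geq 3$ becomes a linear combination of $\sum_{n\geq3}n^2r_0^{n-1}$, $\sum_{n\geq3}nr_0^{n-1}$ and $\sum_{n\geq3}r_0^{n-1}$, each summable in closed form from \eqref{PS4}, \eqref{PS3} and \eqref{PS2} (after the index shift $n\mapsto n-1$, as in the earlier proofs).

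Collecting terms, the left-hand side equals $P(r_0)+Q(r_0)$, where $P(r_0)=-(c+d)(A-B)+\big(2(1-B)-(1-A)\big)(2b-2c-d)r_0$ and
\[
Q(r_0)=\frac{c(1-B)(1+r_0)+\big(d(1-B)-c(1-A)\big)(1-r_0)-(1-A)d(1-r_0)^2}{(1-r_0)^3}.
\]
Imposing $P(r_0)+Q(r_0)=A-B$ and clearing the denominator $(1-r_0)^3$ yields exactly \eqref{th3.e3}; hence the displayed inequality holds, with equality, at $r=r_0$, which is all Lemma~\ref{LAB} needs. The only care here is the bookkeeping in the parameters $1-B$ and $1-A$, where the factor $2(1-B)-(1-A)$ recurs in every $r_0$-linear contribution; there is no real obstacle.

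For sharpness I would take the same extremal as in \eqref{f0}, namely $f_0(z)=z-2bz^2-\sum_{n=3}^{\infty}(cn+d)z^n=(c+1)z+2(c-b)z^2-cz/(1-z)^2-dz^3/(1-z)$, for which $|a_2|=2b$, $|a_n|=cn+d$ for $n\geq3$, and the inequality above is an equality at $|z|=r_0$. Evaluating $zf_0'(z)/f_0(z)$ at the real point $z=r_0$ and invoking \eqref{th3.e3} to collapse the rational expression, one gets $zf_0'(z)/f_0(z)\big|_{z=r_0}=(1-A)/(1-B)$, the leftmost point of the region $\{(1+Az)/(1+Bz):z\in\mathbb{D}\}$ (a disk when $-1<B$, a half-plane when $B=-1$), hence a boundary point. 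As $zf_0'(z)/f_0(z)$ is real and, as one checks, decreasing along $z=t\in(0,1)$ near $r_0$, it drops below $(1-A)/(1-B)$ for $t>r_0$, so $f_0(rz)/r\notin\mathcal{ST}[A,B]$ once $r>r_0$; thus $r_0$ is best possible. (This is consistent with the first part: the relation $(1-B)r_0f_0'(r_0)-(1-A)f_0(r_0)=0$ unwinds, after division by $r_0$, into the same identity that produced \eqref{th3.e3}.)

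The step I expect to be most laborious — though not genuinely difficult — is the algebraic regrouping of the closed-form sum into the compact shape \eqref{th3.e3}, together with checking that the numerator--denominator cancellation in $zf_0'(z)/f_0(z)$ at $z=r_0$ is governed precisely by that equation. No new idea beyond those already used in the proofs of Theorems~\ref{T2.1} and~\ref{T2'} is required.
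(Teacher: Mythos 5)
Your proposal follows the paper's proof essentially verbatim: the same reduction via Lemma \ref{LAB} to the coefficient inequality at $r_0$, the same closed-form summation using \eqref{PS4}--\eqref{PS2}, and the same extremal function \eqref{f0}. Your sharpness step, phrased as $zf_0'(r_0)/f_0(r_0)=(1-A)/(1-B)$, is just an equivalent restatement of the paper's verification that $|zf_0'(z)/f_0(z)-1|=|A-Bzf_0'(z)/f_0(z)|$ at $z=r_0$, so the argument is correct and not materially different.
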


\begin{proof}It is evident  that $r_0$ is the $\mathcal{ST}[A,B]$-radius of  $f\in \mathcal{A}_b$ if and only if $f(r_0z)/r_0\in\mathcal{ST}[A,B]$. Hence, by Lemma \ref{LAB}, it suffices to show
\begin{equation}\label{AB}\sum_{n=2}^{\infty}\big((1-B)n-(1-A)\big)|a_n|r_0^{n-1}\leq A-B\quad (-1\leq B<A\leq 1),\end{equation}
where $r_0$ is the root in $(0,1)$ of  equation \eqref{th3.e3}. From  \eqref{PS4}, \eqref{PS3}, and \eqref{PS2} for  function $f\in\mathcal{A}_b$,   it follows that
\begin{align*}\lefteqn{\sum_{n=2}^{\infty}\big((1-B)n-(1-A)\big)|a_n|r_0^{n-1}}\\
&\leq 2\big(2(1-B)-(1-A)\big)br_0+\sum_{n=3}^{\infty}\big((1-B)n-(1-A)\big)(cn+d)r_0^{n-1}\\
&= 2\big(2(1-B)-(1-A)\big)br_0+c(1-B)\left(\frac{1+r_0}{(1-r_0)^3}-1-4r_0\right)\\
&\quad\quad+\big(d(1-B)-c(1-A)\big)\left(\frac{1}{(1-r_0)^2}-1-2r_0\right)\\
&\quad\quad-(1-A)d\left(\frac{1}{1-r_0}-1-r_0\right)\\
&=(B-A)(c+d)+(2b-2c-d)\big(2(1-B)-(1-A)\big)r_0\\
&\quad\quad+\Big(c(1-B)(1+r_0)+\big(d(1-B)-c(1-A)\big)(1-r_0)\\
&\quad\quad-(1-A)d(1-r_0)^2\Big)/(1-r_0)^3\\
&=A-B.\end{align*}

The function  $f_0$  given by \eqref{f0} shows that the result is sharp. Indeed, at the point $z=r_0$ where $r_0$ is the root in $(0,1)$ of  equation \eqref{th3.e3}, the function $f_0$ satisfies
  \begin{align*}
 \left|\frac{zf'_0(z)}{f_0(z)}-1\right|=\frac{-2(c-b)r_0+\dfrac{2dr_0^2}{1-r_0}+\dfrac{dr_0^3}{(1-r_0)^2}+\dfrac{2cr_0}{(1-r_0)^3}}
 {c+1+2(c-b)r_0-\dfrac{c}{(1-r_0)^2}-\dfrac{dr_0^2}{1-r_0}},&
 \end{align*} and
 \begin{align*}\left|A-B \frac{zf'_0(z)}{f_0(z)}\right|&=\frac{(c+1)(A-B)+2(c-b)r_0(A-2B)}{c+1+2(c-b)r_0-\dfrac{c}{(1-r_0)^2}-\dfrac{dr_0^2}{1-r_0}}\\
 &\quad\quad-\frac{\dfrac{c(A-B)}{(1-r_0)^2}+\dfrac{2cr_0B}{(1-r_0)^3}-\dfrac{dr_0^2(A-3B)}{1-r_0}+\dfrac{dr_0^3B}{(1-r_0)^2}}
 {c+1+2(c-b)r_0-\dfrac{c}{(1-r_0)^2}-\dfrac{dr_0^2}{1-r_0}}.
 \end{align*}
 Then   \eqref{th3.e3} yields
  \begin{equation}\label{th3.e1}\left|\frac{zf_0'(z)}{f_0(z)}-1\right|=\left|A-B\frac{zf_0'(z)}{f_0(z)}\right| \quad(-1\leq B<A\leq 1,\ z=r_0), \end{equation}
or equivalently $f_0\in\mathcal{ST}[A,B]$.
\end{proof}

\begin{theorem}\label{T2''} Let $-1\leq B<A\leq 1$. The $\mathcal{ST}[A,B]$-radius of   $f(z)=z+\sum_{n=2}^{\infty}a_nz^n\in \mathcal{A}_b$ satisfying the coefficient inequality $|a_n|\leq c/n$ for $n\geq3$ and $c>0$ is the real root in $(0,1)$ of the equation
\begin{equation}\label{th4.e3} \begin{split}&\left((c+1)(A-B)-\big(2(1-B)-(1-A)\big)r\left(2b-\frac{c}{2}\right)\right)(1-r)\\
&\quad =c(1-B)+c(1-A)(1-r)\frac{\log(1-r)}{r} \end{split}\end{equation}
The result is sharp.
\end{theorem}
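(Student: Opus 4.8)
The plan is to imitate the proofs of Theorems \ref{T2'} and \ref{TAB}. Since $r_0\in(0,1)$ is the $\mathcal{ST}[A,B]$-radius of $f\in\mathcal{A}_b$ exactly when $f(r_0z)/r_0\in\mathcal{ST}[A,B]$, and the $n$th Taylor coefficient of $f(r_0z)/r_0$ is $a_nr_0^{n-1}$, Lemma \ref{LAB} reduces the task to verifying
\begin{equation*}
\sum_{n=2}^{\infty}\big((1-B)n-(1-A)\big)|a_n|\,r_0^{n-1}\le A-B
\end{equation*}
at the root $r_0$ of \eqref{th4.e3}. First I would isolate the $n=2$ term, which equals $2b\big(2(1-B)-(1-A)\big)r_0$ because $|a_2|=2b$. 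For $n\ge3$ the bound $|a_n|\le c/n$ together with the elementary identity $\big((1-B)n-(1-A)\big)/n=(1-B)-(1-A)/n$ splits the tail into $c(1-B)\sum_{n\ge3}r_0^{n-1}$ and $-c(1-A)\sum_{n\ge3}r_0^{n-1}/n$, and these two series are summed in closed form by the geometric identity \eqref{PS4} and the logarithmic identity \eqref{PS5}, precisely as in the proof of Theorem \ref{T2'}.

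Carrying this out, the left-hand side is bounded above by
\[
2b\big(2(1-B)-(1-A)\big)r_0+c(1-B)\Big(\tfrac{1}{1-r_0}-1-r_0\Big)+c(1-A)\Big(\tfrac{\log(1-r_0)}{r_0}+1+\tfrac{r_0}{2}\Big).
\]
Collecting the terms constant and linear in $r_0$ (the coefficient of $r_0$ factoring through $2(1-B)-(1-A)$) and then clearing the denominator $1-r_0$, a short computation shows this quantity equals $A-B$ if and only if $r_0$ satisfies \eqref{th4.e3}. Since the displayed sum is at most this quantity, Lemma \ref{LAB} yields $f(r_0z)/r_0\in\mathcal{ST}[A,B]$, so $r_0$ is an $\mathcal{ST}[A,B]$-radius.

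For sharpness I would take the extremal function $f_0$ of \eqref{f0'},
\[
f_0(z)=z-2bz^2-\sum_{n=3}^{\infty}\tfrac{c}{n}z^n=(1+c)z+\big(\tfrac{c}{2}-2b\big)z^2+c\log(1-z),
\]
whose coefficients are aligned (all negative past the first) so that at the positive point $z=r_0$ every term drives $zf_0'/f_0$ to the boundary. Using $f_0'(z)=(1+c)+(c-4b)z-c/(1-z)$, one writes $zf_0'(r_0)/f_0(r_0)-1$ and $A-B\,zf_0'(r_0)/f_0(r_0)$ as fractions with the common denominator $f_0(r_0)/r_0$ and with numerators involving $r_0$, $1/(1-r_0)$ and $\log(1-r_0)$. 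Equation \eqref{th4.e3} then forces
\[
\Big|\tfrac{zf_0'(z)}{f_0(z)}-1\Big|=\Big|A-B\tfrac{zf_0'(z)}{f_0(z)}\Big|\qquad(z=r_0),
\]
that is, $zf_0'(r_0)/f_0(r_0)$ lies on the image of $\partial\mathbb{D}$ under $w\mapsto(1+Aw)/(1+Bw)$, the boundary of the disk defining $\mathcal{ST}[A,B]$; hence no radius larger than $r_0$ is admissible. This mirrors the closing argument of Theorem \ref{TAB}.

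I expect the sharpness step to be the main obstacle: one must correctly resolve the signs of $f_0(r_0)$ and of $zf_0'(r_0)-f_0(r_0)$ before removing the absolute values, then clear the factor $1-r_0$ and the logarithm so that what remains is exactly \eqref{th4.e3}, all while using $0\le b\le1$, $c>0$ and $-1\le B<A\le1$. A minor remaining point is to justify that \eqref{th4.e3} has a root in $(0,1)$: writing $\Phi(r)$ for the difference of its two sides, $\Phi(r)\to A-B>0$ as $r\to0^+$ and $\Phi(r)\to-c(1-B)<0$ as $r\to1^-$, so the intermediate value theorem applies, with uniqueness following from monotonicity of $\Phi$ checked by differentiation.
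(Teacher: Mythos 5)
Your proposal follows the paper's proof essentially verbatim: the reduction via Lemma \ref{LAB}, the splitting of the $n=2$ term from the tail, the summation of the tail by \eqref{PS4} and \eqref{PS5}, and the sharpness argument with the extremal function \eqref{f0'} culminating in the equality \eqref{th3.e1} are all exactly what the paper does. The only addition is your intermediate-value-theorem check that \eqref{th4.e3} has a root in $(0,1)$ (with the correct limits $A-B$ and $-c(1-B)$), which the paper leaves implicit.
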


\begin{proof} By Lemma \ref{LAB}, condition \eqref{AB} assures that $r_0$ is  the $\mathcal{ST}[A,B]$-radius of   $f\in \mathcal{A}_b$ where $r_0$ is the real root of  \eqref{th4.e3}. Therefore, using
\eqref{PS4} and \eqref{PS5}  for  $f\in\mathcal{A}_b$ yield
\begin{align*}\lefteqn{\sum_{n=2}^{\infty}\big((1-B)n-(1-A)\big)|a_n|r_0^{n-1}}\\
&\leq 2\big(2(1-B)-(1-A)\big)br_0+\sum_{n=3}^{\infty}\big((1-B)n-(1-A)\big)\left(\frac{c}{n}\right)r_0^{n-1}\\
&=2\big(2(1-B)-(1-A)\big)br_0+c(1-B)\left(\frac{1}{1-r_0}-1-r_0\right)\\
&\quad\quad-c(1-A)\left(-\frac{\log(1-r_0)}{r_0}-1-\frac{r_0}{2}\right)\\
&=c(B-A)+\big(2(1-B)-(1-A)\big)r_0\left(2b-\frac{c}{2}\right)\\
&\quad\quad+\frac{c(1-B)r_0+c(1-A)(1-r_0)\log(1-r_0)}{(1-r_0)r_0}\\
&=A-B.
\end{align*}
 The result is sharp for the function  $f_0$  given  by \eqref{f0'}. Indeed,
$f_0$ satisfies
  \begin{align*}
\left| \frac{zf'_0(z)}{f_0(z)}-1\right|=\frac{-\left(\dfrac{c}{2}-2b\right)r_0+\dfrac{c}{1-r_0}+\dfrac{c\log(1-r_0)}{r_0}}{(1+c)
+\left(\dfrac{c}{2}-2b\right)r_0+\dfrac{c\log(1-r_0)}{r_0}},
 \end{align*}
and
 \begin{align*}
\left|A-B \frac{zf'_0(z)}{f_0(z)}\right|=\frac{(1+c)(A-B)+(A-2B)\left(\dfrac{c}{2}-2b\right)r_0+\dfrac{cB}{1-r_0}+\dfrac{cA\log(1-r_0)}{r_0}}{(1+c)
+\left(\dfrac{c}{2}-2b\right)r_0
+\dfrac{c\log(1-r_0)}{r_0}},
 \end{align*}
 at the point $z=r_0$ where $r_0$ is the  root in $(0,1)$ of  equation \eqref{th4.e3}. From \eqref{th4.e3}, the function $f_0$ is seen to satisfy \eqref{th3.e1}, and hence the result is sharp.
\end{proof}

%

\end{document}